\numberwithin{equation}{section}
\newtheorem{Theorem}{Theorem}[section]
 { \theoremstyle{definition}
\newtheorem{Remark}[Theorem]{Remark} }
\newcommand{\C}{{\mathbb C}}
\newcommand{\Z}{{\mathbb Z}}
\newcommand{\K}{{\mathbb K}}
\newcommand{\dcal}{{\mathcal D}}
\begin{document}
\allowdisplaybreaks

\newcommand{\arXivNumber}{1904.00272}

\renewcommand{\PaperNumber}{043}

\FirstPageHeading

\ShortArticleName{A Note on Spectral Triples on the Quantum Disk}

\ArticleName{A Note on Spectral Triples on the Quantum Disk}

\Author{Slawomir KLIMEK~$^\dag$, Matt MCBRIDE~$^\ddag$ and John Wilson PEOPLES~$^\dag$}

\AuthorNameForHeading{S.~Klimek, M.~McBride and J.W.~Peoples}

\Address{$^\dag$~Department of Mathematical Sciences, Indiana University-Purdue University Indianapolis,\\
\hphantom{$^\dag$}~402 N.~Blackford St., Indianapolis, IN 46202, USA}
\EmailD{\href{mailto:sklimek@math.iupui.edu}{sklimek@math.iupui.edu}, \href{mailto:wipeople@iu.edu}{wipeople@iu.edu}}

\Address{$^\ddag$~Department of Mathematics and Statistics, Mississippi State University,\\
\hphantom{$^\ddag$}~175 President's Cir., Mississippi State, MS 39762, USA}
\EmailD{\href{mailto:mmcbride@math.msstate.edu}{mmcbride@math.msstate.edu}}

\ArticleDates{Received April 03, 2019, in final form May 24, 2019; Published online May 28, 2019}

\Abstract{By modifying the ideas from our previous paper [\textit{SIGMA} \textbf{13} (2017), 075, 26~pages, arXiv:1705.04005], we construct spectral triples from implementations of covariant derivations on the quantum disk.}

\Keywords{invariant and covariant derivations; spectral triple; quantum disk}

\Classification{46L87; 46L89; 58B34; 58J42}

\section{Introduction}
Spectral triples are a key tool in noncommutative geometry \cite{Connes}, since they allow using analytical methods in studying quantum spaces.
In this note we show how, by changing the concept of a Hilbert space implementation of an unbounded derivation, one can use the techniques of our papers \cite{KMRSW,KMR} to construct meaningful, geometrical spectral triples for the quantum disk, the Toeplitz $C^*$-algebra of the unilateral shift.

Our previous paper \cite{KMRSW} classified unbounded derivations, covariant with respect to a natural rotation, and their implementations in Hilbert spaces obtained from the GNS construction with respect to invariant states. Surprisingly, no implementation of a covariant derivation in any GNS Hilbert space for a faithful, normal, invariant state turned out to have compact parametrices for a large class of boundary conditions. However, if we relax the concept of an implementation by allowing operators to act between different Hilbert spaces, then it turns out, as demonstrated in this paper, that there is an interesting class of examples of spectral triples that can be constructed this way.

In \cite{CKW,KM} our earlier attempts at constructing Dirac-type operators on the quantum disk using APS-type boundary conditions to eliminate infinite dimensional kernel/cokernel did not lead to examples of spectral triples. The class of operators considered in~\cite{CKW,KM}, designed to mimic the classical Atiyah--Patodi--Singer theory, does not have bounded commutators with representations of the algebra. Additionally, as pointed out in \cite{FMR}, there are fundamental reasons why APS boundary are not compatible with spectral triples even in classical geometry for algebras of functions which are non-constant on the boundary, as the corresponding domains of the Dirac-type operators are not preserved by the representations of the algebra. Fortunately, for similar yet quite different Dirac-type operators considered in this paper, coming from implementations of derivations, there is enough flexibility that the size of the kernel can be controlled by the growth conditions of the coefficients, as observed previously in \cite{KMRSW,KMR}. Such an option does not seem to have an obvious analog for the classical Dirac operator.

Other examples of spectral triples of the Toeplitz algebra, in GNS Hilbert spaces of non-normal states, were also constructed in \cite{C}, \cite[Section~4.2]{CM}, \cite{DD} and in \cite{EFI}. Those authors were working with irreducible representations of the Toeplitz algebra, unlike the representations considered in this paper.

We review the notation and basic concepts from~\cite{KMRSW} below and, in a number of places, we use the results contained in that reference.

\section{Quantum disk}
Let $\{E_k\}_{k=0}^\infty$ be the canonical basis for $\ell^2(\Z_{\geq 0})$ and $U$ be the unilateral shift defined by
\begin{gather*}UE_k = E_{k+1}.\end{gather*}
 Note that $U$ is an isometry, i.e., $U^*U = I$. Consider the Toeplitz algebra $A = C^*(U)$, the $C^*$-algebra generated by~$U$. This algebra is called the quantum disk. We also use the diagonal label operator
\begin{gather*}\K E_k = kE_k.\end{gather*}
It follows that for $a\colon \Z_{\geq 0} \to \C$, we have
\begin{gather*}a(\K) E_k = a(k) E_k.\end{gather*}
These are precisely the operators which are diagonal with respect to $\{E_k\}$. The operators $(\K,U)$ serve as noncommutative polar coordinates, and they satisfy
the following relation
\begin{gather*}\K U=U(\K+I).\end{gather*}

Let $c$ be the set of $a(k)$, as above, which are convergent as $k \to \infty$ and let $c_{00}^{+}$ be the set of all eventually constant functions, i.e., functions $a(k)$ such that there exists $k_0$ where $a(k)$ is constant for $k \geq k_0$.

Consider, for future reference, the following dense $*$-subalgebra of $A$
\begin{gather*}
\mathcal{A} = \bigg\{a = \sum_{n\geq 0}U^n a_n(\K)+\sum_{n< 0}a_n(\K)(U^*)^{-n} \colon a_n(k)\in c_{00}^+,\ \textrm{finite sums}\bigg\}.
\end{gather*}
By Proposition 3.1 in \cite{KMRSW}, $\mathcal{A} = \operatorname{Pol}(U, U^*)$.

\section{Derivations on quantum disk}
Let $\rho_{\theta}\colon A \to A $, $0\leq\theta<2\pi$, be a one parameter group of automorphisms of $A$ defined by
\begin{gather*}\rho_\theta (a) = {\rm e}^{{\rm i}\theta\K}a{\rm e}^{-{\rm i}\theta\K}.\end{gather*}
Since $\rho_\theta (U) = {\rm e}^{{\rm i}\theta}U$ and $\rho_\theta (U^*) = {\rm e}^{-{\rm i}\theta}U^*$, the automorphisms $\rho_\theta$ are well defined on~$A$
and they preserve $\mathcal{A}$. By Proposition~4.2 in~\cite{KMRSW}, any densly-defined derivation $d\colon \mathcal{A} \to A$, covariant with respect to $\rho_\theta$
\begin{gather*}\rho_\theta(d(a)) = {\rm e}^{{\rm i}\theta}d(\rho_\theta(a)),\end{gather*}
is of the following form
\begin{gather*}d(a) = [U\beta(\K),a],\end{gather*}
where $\beta(k+1) - \beta(k) \in c$. We use notation
\begin{gather*}
\lim_{k \to \infty}\beta(k+1) - \beta(k) := \beta_\infty,
\end{gather*}
and below we only consider covariant derivations with $\beta_\infty\ne 0$.

\section{Covariant implementations on quantum disk}
We will begin by introducing the following family of states $\tau_w\colon A \to \C$ on $A$, defined by
\begin{gather*}\tau_w (a) = \operatorname{tr}(w(\K)a),\end{gather*}
where $w(k) > 0$ for all $k \in \Z_{\geq 0}$ and
\begin{gather*}\sum_{k=0}^{\infty}{w(k)}=1.\end{gather*}

As a result of Proposition~5.4 in \cite{KMRSW}, $\tau_w$ are precisely the $\rho_\theta$-invariant, normal, faithful states on~$A$. Let $H_w$ be the Hilbert space obtained by Gelfand--Naimark--Segal (GNS) construction on~$A$ using state~$\tau_w$. Since the state is faithful, $H_w$ is the completion of~$A$ with respect to the inner product given by
\begin{gather*}(a,b)_w = \tau(w(\K)a^*b).\end{gather*}
A simple calculation leads to the following precise description:
$H_w$ is the Hilbert space consisting of infinite series of operators
\begin{gather}\label{f_expansion}
f = \sum_{n\geq 0}U^n f_n(\K)+\sum_{n< 0}f_n(\K)(U^*)^{-n}
\end{gather}
satisfying
\begin{gather*}
\|f\|_w^2 = \sum_{n \geq 0} \sum_{k \geq 0} w(k)|f_n(k)|^2 + \sum_{n < 0} \sum_{k \geq 0} w(k-n)|f_n(k)|^2 < \infty.
\end{gather*}
It is important to notice that $\mathcal{A} \subseteq H_w$ and that $\mathcal{A}$ is dense in $H_w$. The space of all formal series~\eqref{f_expansion} with arbitrary coefficients~$f_n(k)$ will be denoted by $\mathcal F$.

The GNS representation map $\pi_w\colon A \to B(H_w)$ is given by left-hand multiplication
\begin{gather*}\pi_w (a)f = af.\end{gather*}

Define a one parameter group of unitary operators $U_\theta ^w\colon H_w \to H_w$ via the formula
\begin{gather*}U_\theta^wf = \sum\limits_{n \geq 0}U^n{\rm e}^{{\rm i}n\theta}f_n(\K) + \sum\limits_{n<0}{\rm e}^{{\rm i}n\theta}f_n(\K)(U^*)^{-n}.\end{gather*}
It is easily seen that they are implementing $\rho_\theta$, as we have
\begin{gather*}
 \pi_w(\rho_\theta(a)) = U_\theta^w\pi_w(a)\big(U_\theta^w\big)^{-1}.
\end{gather*}

Consider an additional weight, $w'(k)$, possibly different from~$w(k)$, satisfying the same conditions. An operator $D\colon H_w\supseteq \mathcal{A} \to H_{w'}$ is called a covariant implementation of a~covariant derivation $d$ if for every $a\in\mathcal{A}$, and for every $f\in\mathcal{A}$ considered as an element of both~$H_w$ and~$H_{w'}$, we have
\begin{gather*}D\pi_w(a)f -\pi_{w'}(a)Df = \pi_{w'}(d(a))f,\end{gather*}
and, additionally, $D$ satisfies
\begin{gather*}U_\theta^{w'} D \big(U_\theta^w\big)^{-1}f={\rm e}^{{\rm i}\theta}Df.
\end{gather*}
Allowing for implementations between different Hilbert spaces is the key difference between this paper and~\cite{KMRSW}.

Exactly the same argument as in Proposition~6.1 in~\cite{KMRSW} shows that any implementation $D$ is of the form
\begin{gather}\label{D_formula}
Df = U\beta(\K)f - fU\alpha(\K),
\end{gather}
where $\alpha(k)$ is a sequence such that
\begin{gather} \label{one}
\sum_{k=0}^\infty|\beta(k) - \alpha(k)|^2w'(k) < \infty.
\end{gather}

The assumption $\beta_\infty\ne 0$ implies that $\beta(k)$ has at most finitely many zeros. Without loss of generality, we may assume that $\beta(k) \neq 0$ as this can be obtained by a bounded perturbation. Arguments in \cite{KMRSW} show that if $\alpha(k)$ has infinitely many zeros, then $\operatorname{Ker}(D)$ has infinite dimension, and so $D$ cannot define a spectral triple. Hence we assume that for every $k$ we have
\begin{gather} \label{three}
 \alpha(k), \beta(k)\neq 0.
\end{gather}

Additionally, as in \cite{KMRSW}, we write
 \begin{gather} \label{four}
 \alpha(k) = \beta(k)\frac{\mu(k+1)}{\mu(k)},\qquad \text{where} \quad \mu(0) = 1.
 \end{gather}
Notice that the operator $D$, given by the formula~\eqref{D_formula}, is a well-defined linear map on $\mathcal{F}$ and $D\colon \mathcal F\to \mathcal F$, where again~$\mathcal{F}$ is the space of all formal series defined by equation~\eqref{f_expansion}. This allows us to describe the maximal domain of $D$ in $H_w$ as
\begin{gather*}
\textrm{dom}(D) = \left\{ f \in H_w\subseteq\mathcal{F}\colon Df \in H_{w'} \right\}.
\end{gather*}

Arguing exactly as in the paragraph preceding Proposition~5 of~\cite{CKW}, at least for the choices of parameters at the end of the paper, the operator $D$ has a natural radial/angular decomposition in full analogy with the classical $d$-bar operator on the disk.

The following is the main technical result of this paper.

\begin{Theorem}\label{compact_para}Assume the following
\begin{gather} \label{five}
\left|\frac{\beta(k) \cdots \beta(k+n)}{\beta(j) \cdots \beta(j+n)}\right| \leq {\rm const} \qquad \textrm{for all} \quad k \leq j, \quad n\in\Z_{\geq0},
\\ \label{six}
\sum_{k=0}^{\infty}\sum_{j=0}^{\infty}\frac{1}{(\max (j,k) + 1)^2}\left|\frac{\mu(j)}{\mu(k)}\right|^2\frac{w(k)}{w'(j)} < \infty,
\end{gather}
there exists $N$ such that
\begin{gather}
\label{seven}
\sum_{k=0}^{\infty} \frac{(1+k)^{2n}}{|\mu(k)|^{2}}w(k) <\infty
\end{gather}
for $n<N$ and infinite for $n\geq N$.
Assume additionally that $\beta_\infty\ne 0$, and \eqref{one}, \eqref{three}, and \eqref{four} hold. Then $D$ on $\operatorname{dom}(D)$ has a compact parametrix.
In fact, there exists a compact operator $Q\colon H_{w'} \to H_w$, satisfying $DQ = I$ and $QD = I - C$ where C is a compact operator.
\end{Theorem}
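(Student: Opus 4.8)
The plan is to diagonalise $D$ under the circle action and to build the parametrix one Fourier mode at a time. By the covariance relation $U_\theta^{w'}D(U_\theta^w)^{-1}=e^{i\theta}D$, if we write $H_w=\bigoplus_{\nu\in\Z}H_w^{(\nu)}$ for the spectral decomposition of $U_\theta^w$ (the degree-$\nu$ subspace being spanned by the $U^\nu f(\K)$ for $\nu\ge 0$ and by the $f(\K)(U^*)^{-\nu}$ for $\nu<0$), and similarly for $H_{w'}$, then $D$ is block-diagonal, $D=\bigoplus_\nu D^{(\nu)}$ with $D^{(\nu)}\colon H_w^{(\nu)}\to H_{w'}^{(\nu+1)}$. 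Each $H_w^{(\nu)}$ is a weighted $\ell^2$-space in the coefficient sequence, and a direct computation from \eqref{D_formula} shows that $D^{(\nu)}$ is a first-order linear recurrence operator: for $\nu\ge0$ it sends $f(k)\mapsto\beta(k+\nu)f(k)-\alpha(k)f(k+1)$, and for $\nu<0$ it has an analogous bidiagonal form together with a genuine boundary relation at $k=0$. The parametrix is assembled as $Q=\bigoplus_\nu Q^{(\nu+1)}$, where $Q^{(\nu+1)}\colon H_{w'}^{(\nu+1)}\to H_w^{(\nu)}$ is a suitable one-sided inverse of $D^{(\nu)}$.

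For $\nu\ge0$ I invert the recurrence ``from infinity'', namely
\[
\bigl(Q^{(\nu+1)}g\bigr)(k)=\sum_{j\ge k}\frac{\alpha(k)\cdots\alpha(j-1)}{\beta(k+\nu)\cdots\beta(j+\nu)}\,g(j)
=\sum_{j\ge k}\Bigl(\prod_{i=k}^{j-1}\frac{\beta(i)}{\beta(i+\nu)}\Bigr)\frac{\mu(j)}{\mu(k)\,\beta(j+\nu)}\,g(j),
\]
the second form using \eqref{four}; for $\nu<0$ I invert ``from $0$'', starting from the boundary relation, which produces a similar lower-triangular summation formula. The homogeneous solutions of $D^{(\nu)}$ for $\nu\ge0$ are the multiples of $h^{(\nu)}(k)=\mu(k)^{-1}\prod_{i=0}^{\nu-1}\beta(i)^{-1}\prod_{i=0}^{\nu-1}\beta(k+i)$; since $\beta_\infty\ne0$ forces $|\beta(m)|$ to grow linearly in $m$, the quantity $|h^{(\nu)}(k)|^2$ is comparable (for each fixed $\nu$) to $(1+k)^{2\nu}|\mu(k)|^{-2}$, so by \eqref{seven} we have $h^{(\nu)}\in H_w$ precisely when $\nu<N$; for $\nu<0$ the boundary relation forces the homogeneous solution to vanish. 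Hence $\ker D=\bigoplus_{\nu=0}^{N-1}\C\,h^{(\nu)}$ is $N$-dimensional. Granting that the above formulas define bounded operators (the estimates below), $D^{(\nu)}Q^{(\nu+1)}=I$ follows directly from the recurrence, so $I-Q^{(\nu+1)}D^{(\nu)}$ has range inside $\ker D^{(\nu)}$ for every $\nu$ (and vanishes unless $0\le\nu<N$); therefore $DQ=I$ on $H_{w'}$ and $QD=I-C$ on $\operatorname{dom}(D)$ with $C:=I-QD$ of rank at most $N$, hence compact.

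It remains to prove that $Q$ itself is compact. Writing $q^{(\nu+1)}(k,j)$ for the matrix entries of $Q^{(\nu+1)}$, the point is that \eqref{five} controls the products $\prod_{i=k}^{j-1}\beta(i)/\beta(i+\nu)$ (and, for $\nu<0$, their reciprocals after splitting off one factor) uniformly in $k\le j$ and in $\nu$, so that together with $|\beta(m)|\ge c(1+m)$ one obtains
\[
\bigl|q^{(\nu+1)}(k,j)\bigr|\ \le\ \frac{C}{1+\max(j,k)}\,\Bigl|\frac{\mu(j)}{\mu(k)}\Bigr|,
\]
with $C$ independent of $\nu$. Therefore $\|Q^{(\nu+1)}\|_{HS}^2\le C^2S$, where $S<\infty$ is the double sum in \eqref{six}: every block is Hilbert--Schmidt, hence compact, and $\sup_\nu\|Q^{(\nu+1)}\|<\infty$, so $Q$ is bounded from $H_{w'}$ to $H_w$. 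Moreover the block norms decay: as $\nu\to+\infty$ the pointwise bound $C(1+j+\nu)^{-1}|\mu(j)/\mu(k)|\to0$ together with dominated convergence against the $\nu$-independent majorant coming from \eqref{six} gives $\|Q^{(\nu+1)}\|_{HS}\to0$, and as $\nu\to-\infty$ the reindexed sum defining $\|Q^{(\nu+1)}\|_{HS}^2$ is a tail of the convergent sum $S$, so it also tends to $0$. Consequently $Q$ is the operator-norm limit of its truncations to $|\nu|\le M$, each a finite direct sum of Hilbert--Schmidt operators, hence $Q$ is compact, and the theorem follows.

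The main difficulty is this last step. Summing $\|Q^{(\nu+1)}\|_{HS}^2$ over $\nu$ diverges, so $Q$ is not Hilbert--Schmidt and the argument must be run mode-by-mode, with \eqref{six} playing the role of a common integrable majorant and \eqref{five} keeping the $\beta$-quotients (which otherwise grow like a power of $\nu$) bounded; obtaining simultaneously the uniform Hilbert--Schmidt bound and the decay of the mode norms is the crux. A smaller, routine point is the bookkeeping for the finitely many modes $0\le\nu<N$ where $D^{(\nu)}$ has a one-dimensional kernel: there one uses $w(k)>0$ for all $k$ to see that the same summation formula still converges and defines a bounded right inverse, so that $C$ is genuinely of finite rank.
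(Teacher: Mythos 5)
Your overall route is the paper's: the same Fourier-mode decomposition, the same block inverses (your product $\alpha(k)\cdots\alpha(j-1)\big/\big(\beta(k+\nu)\cdots\beta(j+\nu)\big)$ is, after substituting \eqref{four}, identically the paper's kernel $\frac{\beta(k)\cdots\beta(k+\nu-1)}{\beta(j)\cdots\beta(j+\nu)}\frac{\mu(j)}{\mu(k)}$), the same use of \eqref{seven} to locate the kernel, and the same compactness argument (uniform Hilbert--Schmidt bounds from \eqref{five} and \eqref{six}, decay of the block norms, norm convergence of truncations). The one step that genuinely fails is the claim that $C=I-QD$ is ``of rank at most $N$, hence compact.'' Finite rank implies compact only for bounded operators, and the operator $I-\tilde Q_\nu D_\nu$ that your uncorrected summation formula produces on a mode $0\le\nu<N$ is not bounded on $\ell^2_w$: telescoping $\tilde Q_\nu D_\nu f$ gives $(I-\tilde Q_\nu D_\nu)f=c_\nu(f)\,h^{(\nu)}$ with $c_\nu(f)=\lim_{L\to\infty}\mu(L+1)f(L+1)/(\beta(L+1)\cdots\beta(L+\nu))$. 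This functional vanishes on the dense subspace of finitely supported sequences, while $c_\nu(h^{(\nu)})=1$ (as it must, since $h^{(\nu)}\in\ker D_\nu\cap\ell^2_w$ for $\nu<N$). A rank-one operator that is zero on a dense subspace but not identically zero admits no bounded extension, so your $C$ is not a compact operator and the conclusion $QD=I-C$ with $C$ compact is not established by your argument.

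The repair is precisely the rank-one renormalization the paper performs on these finitely many modes: replace $\tilde Q_\nu$ by $Q_\nu g=\tilde Q_\nu g-\frac{(\tilde Q_\nu g)(0)}{\beta(0)\cdots\beta(\nu-1)}h^{(\nu)}$, which is still Hilbert--Schmidt and still a right inverse because $D_\nu h^{(\nu)}=0$. The telescoping then yields $Q_\nu D_\nu f=f-\frac{f(0)}{\beta(0)\cdots\beta(\nu-1)}h^{(\nu)}$, and this defect is a genuinely bounded rank-one operator: evaluation at $k=0$ is continuous on $\ell^2_w$ because $w(0)>0$ (this, rather than $w(k)>0$ for all $k$ as in your closing remark, is the relevant point), and $h^{(\nu)}\in\ell^2_w$ by \eqref{seven}. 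With that correction inserted, your proof coincides with the paper's.
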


\begin{proof}We begin by expressing the action of $D$ in terms of the Fourier decomposition
\begin{gather*}
 Df = \sum_{n\geq 0}U^{n+1}(D_n f_n)(\K)+\sum_{n< 0}(D_n f_n)(\K)(U^*)^{-n-1},
\end{gather*}
where $D_n$ are given by the following formulas:
for $n \geq 0$ and $f\in \textrm{dom}(D_n) = \big\{ f \in \ell^2_{w}\colon D_nf \in \ell^2_{w'} \big\}$,
\begin{gather*}
D_nf(k) = \beta(k+n)f(k) - \beta(k)\frac{\mu(k+1)}{\mu(k)}f(k+1),
\end{gather*}
and for $n<0$ and $f \in \operatorname{dom}(D_n)= \big\{ f \in \ell^2_{w_n}\colon D_nf \in \ell^2_{w'_{n+1}} \big\}$,
\begin{gather*}
D_nf(k) = \beta(k-n-1)\frac{\mu(k-n)}{\mu(k-n-1)}f(k) - \beta(k-1)f(k-1),
\end{gather*}
where we used the following notation
\begin{gather*}\ell^2_w=\left\{f(k)\colon \sum_{k=0}^\infty|f(k)|^2w(k)<\infty\right\},
\end{gather*}
and
\begin{gather*}\ell^2_{w_n}=\left\{f(k)\colon \sum_{k=0}^\infty|f(k)|^2w(k-n)<\infty\right\}.
\end{gather*}
Naturally, two cases ($n\geq 0$ and $n<0$) arise.

Case 1 ($n\geq 0$): We begin by looking at $n\geq N$ from condition~(\ref{seven}). The formal kernel of~$D_n$ is one-dimensional and spanned by the following vector
\begin{gather*}
h^{(n)}(k) = \frac{\beta(k) \cdots \beta(k+n-1)}{\mu(k)}.
\end{gather*}
By formula (\ref{seven}), we have $\big\|h^{(n)}\big\|_{\ell^2_w} = \infty$, so the kernel of $D_n$ in $H_w$ is trivial.

Consider the following operator $Q_n\colon \ell^2_{w'}\to \ell^2_{w}$
\begin{gather*}
 Q_ng = \sum_{j=k}^{\infty} \frac{\beta(k)\cdots\beta(k+n-1)}{\beta(j)\cdots\beta(j+n)}\frac{\mu(j)}{\mu(k)}g(j).
\end{gather*}
We have the following formula for the Hilbert--Schmidt norm of $Q_n$
\begin{gather*}
 \|Q_n\|_{\rm HS}^2 = \sum_{k=0}^\infty \sum_{j=k}^\infty \left| \frac{\beta(k)\cdots\beta(k+n-1)}{\beta(j)\cdots\beta(j+n)}\right|^2\left|\frac{\mu(j)}{\mu(k)}\right|^2\frac{w(k)}{w'(j)}.
\end{gather*}
Therefore, using assumption (\ref{five}), we can estimate as follows
\begin{gather*}
 \|Q_n\|_{\rm HS}^2 \leq \sum_{k=0}^\infty \sum_{j=k}^\infty \left|\frac{\beta(k)\cdots\beta(k+n-1)}{\beta(j)\cdots\beta(j+n-1)}\right|^2\frac{1}{|\beta(j+n)|^2}\left|\frac{\mu(j)}{\mu(k)}\right|^2\frac{w(k)}{w'(j)} \leq \\
\hphantom{\|Q_n\|_{\rm HS}^2}{} \leq \textrm{const}\sum_{k=0}^\infty \sum_{j=k}^\infty \frac{1}{(1+j+n)^2}\left|\frac{\mu(j)}{\mu(k)}\right|^2\frac{w(k)}{w'(j)}.\\
\end{gather*}
Consequently, we have $\|Q_n\|_{\rm HS}^2 < \infty$ by (\ref{six}) and $\|Q_n\|_{\rm HS} \to 0$ as $n \to \infty$ by Lebesgue's do\-mi\-nated convergence theorem.

An easy calculation shows that $D_nQ_ng=g$ for every $g\in \ell^2_{w'}$. Consequently we have: $\operatorname{Ran}(Q_n)\subseteq \operatorname{dom}(D_n)$ and moreover $\operatorname{Ran}(D_n)=\ell^2_{w'}$. Given any $f\in \operatorname{dom}(D_n)$ the difference $Q_nD_nf-f$ is in the domain of~$D_n$, and
\begin{gather*}D_n(Q_nD_nf-f)=0.\end{gather*}
But the kernel of $D_n$ is trivial so $Q_nD_nf=f$, and by continuity $Q_nD_n=I$, therefore $Q_n$ is the inverse of~$D_n$.

\begin{Remark}Notice that in the $n\geq N$ case the maximal domain $\operatorname{dom}(D_n)$ is equal the minimal domain $\operatorname{dom}_{\min}(D_n)$, defined as the closure of $c_{00}$ with respect to the graph norm. Here $c_{00}$ is the set of sequences that are eventually zero. Given any $f\in \operatorname{dom} (D_n)$ we have that $D_nf\in\ell^2_{w'}$ and we choose a sequence $\big\{\tilde f_j\big\}$, $\tilde f_j\in c_{00}$ converging to $D_nf$. Consider the sequence $f_j=Q_n\tilde f_j$, which is in $c_{00}$ as easily seen from the formula for~$Q_n$. Then we have $f_j\to f$ by continuity of~$Q_n$ and $D_nf_j\to D_nf$, so that $f$ in the closure of~$c_{00}$.
\end{Remark}

We will now consider $0 \leq n < N$. We define the following operator
\begin{gather*}
 \tilde{Q}_ng(k) = \sum_{j=k}^{\infty} \frac{\beta(k) \cdots \beta(k+n-1)}{\beta(j) \cdots \beta(j+n)} \frac{\mu(j)}{\mu(k)}g(j).
\end{gather*}
By the results in the case $n\geq N$, previously discussed, $\tilde{Q}_n$ is a Hilbert--Schmidt operator from~$\ell^2_{w'}$ to~$\ell^2_{w}$. We will verify that the operator
\begin{gather*}Q_ng(k) = \tilde{Q}_ng(k) - \frac{\tilde{Q}_ng(0)}{\beta(0)\cdots\beta(n-1)}h^{(n)}(k)\end{gather*}
is a parametrix of $D_n$. The second term in the above expression is a rank~1 operator, so $Q_n$ is still a Hilbert--Schmidt operator. Additionally
\begin{gather*}
 D_nQ_ng = D_n\tilde{Q}_ng -\frac{\tilde{Q}_ng(0)}{\beta(0)\cdots\beta(n-1)}D_n\big(h^{(n)}\big) = g
\end{gather*}
by results in the previous case.

Given $f\in$ dom$(D_n)$ we have
\begin{gather*}
Q_nD_nf(k)= \lim_{L\to\infty} \sum_{j=k}^{L}\frac{\beta(k) \cdots \beta(k+n-1)}{\beta(j) \cdots \beta(j+n)} \frac{\mu(j)}{\mu(k)} \left( \beta(j+n)f(j) - \beta(j)\frac{\mu(j+1)}{\mu(j)}f(j+1) \right)\\
 \hphantom{Q_nD_nf(k)=}{} -\sum_{j=0}^{L} \frac{\mu(j)}{\beta(j) \cdots \beta(j+n)} \left(\beta(j+n)f(j)-\beta(j) \frac{\mu(j+1)}{\mu(j)}f(j+1)\right)h^{(n)}(k).
\end{gather*}
Simplifying the telescoping sum yields
\begin{gather*}
 Q_nD_nf(k) = f(k) - \frac{f(0)}{\beta(0) \cdots \beta(n-1)}h^{(n)}(k).
\end{gather*}
Clearly, by the imposed conditions,{\samepage
\begin{gather*}C_nf(k) := \frac{f(0)}{\beta(0) \cdots \beta(n-1)}h^{(n)}(k)\end{gather*} is a
Hilbert--Schmidt operator, and $Q_nD_n=I-C_n$, proving that $Q_n$ is a parametrix of $D_n$.}

Case 2 ($n < 0$): Clearly $\operatorname{Ker}(D_n) = 0$ and $D_n$ is invertible with inverse
\begin{gather*}
Q_ng(k) = \sum_{j=0}^k\frac{\beta(j) \cdots \beta(j-n-2)}{\beta(k) \cdots \beta(k-n-1)}\frac{\mu(j-n-1)}{\mu(k-n)}g(j).
\end{gather*}
The above is done by direct calculation and can be verified by checking $Q_nD_n = I$ and $Q_nD_n = I$,
see additionally Lemma~7.7 in~\cite{KMRSW}. We will now show that $Q_n$ is a Hilbert--Schmidt operator. Consider the Hilbert--Schmidt norm
\begin{gather*}
 \|Q_n\|_{\rm HS}^2 = \sum_{k=0}^{\infty}\sum_{j=0}^k \left|\frac{\beta(j) \cdots \beta(j-n-2)}{\beta(k) \cdots \beta(k-n-1)}\right|^2\left|\frac{\mu(j-n-1)}{\mu(k-n)}\right|^2\frac{w(k-n)}{w'(j-n-1)}.
\end{gather*}
Changing the indices $k-n \to k'$ and $j-n-1 \to j'$, the order of summation, and estimating as above yields
\begin{gather*}
 \|Q_n\|_{\rm HS}^2 \leq \sum_{j=-n-1}^\infty\sum_{k=j+1}^\infty \frac{1}{(1+k)^2}\left|\frac{\mu(j)}{\mu(k)}\right|^2 \frac{w(k)}{w'(j)} < \infty.
\end{gather*}
It follows that $\|Q_n\|_{\rm HS}^2 \to 0$ as $-n \to \infty$ since this is the tail of an absolutely convergent series.

Consequently, in all cases, $D_n$ has Hilbert--Schmidt, and thus compact, parametrices $Q_n$, with Hilbert--Schmidt norms approaching zero. Thus, it follows that~$D$ has a compact parametrix. This completes the theorem. \end{proof}

The main significance of this result is outlined in the following theorem. First, we introduce some notation related to spectral triples as considered in~\cite{KMRSW}.

Let $H = H_{w'} \bigoplus H_w$, with grading $\Gamma \big|_{H_{w'}} =1$ and $\Gamma \big|_{H_w} = -1$. Define a representation $\pi\colon A \to B(H)$ of $A$ in $H$ by the formula
\begin{gather*}\pi(a) = (\pi_{w'}(a),\pi_w(a)),\end{gather*}
and also define a quantum analog of a Dirac operator on the unit disk by
\begin{gather*}
\dcal = \left[
\begin{matrix}
0 & D \\
D^* & 0
\end{matrix}\right],
\end{gather*}
so that $\pi(a)$ are even and $\dcal$ is odd with respect to grading $\Gamma$.

\begin{Theorem}With the above notation, $(\mathcal{A},H,\dcal)$ forms an even spectral triple over~$A$.
\end{Theorem}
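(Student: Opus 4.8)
The plan is to verify the three defining requirements of an even spectral triple for $(\mathcal A,H,\dcal)$: that $\dcal$ is self-adjoint with compact resolvent; that for every $a\in\mathcal A$ the operator $\pi(a)$ preserves $\operatorname{dom}(\dcal)$ and $[\dcal,\pi(a)]$ extends to a bounded operator on $H$; and that $\Gamma$ is a self-adjoint involution commuting with $\pi$ and anticommuting with $\dcal$. The grading condition is immediate from the construction: $\Gamma=\pm I$ on the two summands, $\pi(a)$ is block diagonal and hence commutes with $\Gamma$, and $\dcal$ is block off-diagonal and hence anticommutes with $\Gamma$.

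\textbf{Self-adjointness.} I would first record that $D$, on its maximal domain $\operatorname{dom}(D)=\{f\in H_w\colon Df\in H_{w'}\}$, is closed and densely defined. Closedness is formal: the coefficient functionals $f\mapsto f_n(k)$ are bounded on $H_w$ and on $H_{w'}$, and $D$ acts by the bidiagonal formulas for the $D_n$ in each Fourier mode, so if $f^{(j)}\to f$ in $H_w$ and $Df^{(j)}\to g$ in $H_{w'}$ then $g=Df$ in $\mathcal F$, whence $f\in\operatorname{dom}(D)$ and $Df=g$. Density holds since $\operatorname{dom}(D)$ contains all finitely supported formal series, which are dense in $H_w$; consequently $D^*$ is also closed and densely defined. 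Then the off-diagonal doubling $\dcal$, with domain $\operatorname{dom}(D^*)\oplus\operatorname{dom}(D)$, is self-adjoint by the standard argument: symmetry is immediate from the definition of the adjoint, and $\operatorname{dom}(\dcal^*)\subseteq\operatorname{dom}(\dcal)$ follows from $D^{**}=D$.

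\textbf{Compact resolvent.} Theorem~\ref{compact_para} supplies a compact operator $Q\colon H_{w'}\to H_w$ with $DQ=I$ and $QD=I-C$, $C$ compact. Taking Hilbert-space adjoints of these identities — legitimate since $Q$ is bounded — yields $Q^*D^*=I$ on $\operatorname{dom}(D^*)$ and $D^*Q^*=I-C^*$, so $Q^*$ is a compact parametrix for $D^*$. Assembling $Q$ and $Q^*$ into the off-diagonal operator $\mathcal Q$ on $H$, one obtains a compact operator with $\dcal\mathcal Q=I-K_1$ and $\mathcal Q\dcal=I-K_2$, where $K_1=\operatorname{diag}(0,C^*)$ and $K_2=\operatorname{diag}(0,C)$ are compact. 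Since $\dcal$ is self-adjoint the resolvents $(\dcal\pm{\rm i})^{-1}$ exist; from $(\dcal\pm{\rm i})\mathcal Q=I-K_1\pm{\rm i}\mathcal Q$ one solves for $(\dcal\pm{\rm i})^{-1}$ and reads off that it equals $\mathcal Q$ plus a bounded operator composed with a compact operator, hence is compact. Equivalently $(1+\dcal^2)^{-1}$ is compact.

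\textbf{Bounded commutators, and the main obstacle.} Because $D$ is a covariant implementation of $d$, for $a\in\mathcal A$ and $f\in\mathcal A$ one has $D\pi_w(a)f-\pi_{w'}(a)Df=\pi_{w'}(d(a))f$ and the analogous identity for $D^*$ obtained by taking adjoints; since $d(a)\in A$ is a bounded operator, on the core $\mathcal A\oplus\mathcal A$ the operator $[\dcal,\pi(a)]$ coincides with a bounded operator, and therefore extends to a bounded operator on $H$. The point requiring genuine care is the domain statement $\pi(a)\operatorname{dom}(\dcal)\subseteq\operatorname{dom}(\dcal)$ together with the persistence of the commutator identity on all of $\operatorname{dom}(\dcal)$: this is handled, as in \cite{KMRSW} and in the spirit of the Remark following Theorem~\ref{compact_para}, by identifying the maximal domains of $D$ and $D^*$ with the graph-norm closures of $\mathcal A$ and then running a limiting argument that uses the closedness of $D$ and of $D^*$. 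This domain bookkeeping — the essential novelty here, forced by $D$ mapping between two genuinely different Hilbert spaces — is the only real obstacle; the analytic substance of the statement, the existence of a compact parametrix, has already been established in Theorem~\ref{compact_para}.
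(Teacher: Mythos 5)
Your verification of self-adjointness of $\dcal$ and the passage from the compact parametrix of Theorem~\ref{compact_para} to compactness of $(\dcal\pm{\rm i})^{-1}$ is correct and usefully fills in details that the paper delegates to the appendix of \cite{KMRSW}; up to that point you are on the paper's route. The genuine gap is in the step you treat as routine, not in the one you flag as the obstacle. You assert that since $d(a)\in A$ is bounded, $[\dcal,\pi(a)]$ coincides with a bounded operator on the core. But the off-diagonal block $D\pi_w(a)-\pi_{w'}(a)D$ is a map from $H_w$ into $H_{w'}$: it is left multiplication by the fixed element $d(a)$, measured in two \emph{different} weighted norms. Boundedness of $d(a)$ on $\ell^2$ only gives $\|d(a)f\|_{w'}\le\|d(a)\|\,\|f\|_{w'}$, which controls the commutator by $\|f\|_{w'}$, not by $\|f\|_{w}$; these norms are comparable only if $w'(k)\le{\rm const}\cdot w(k)$, and no such hypothesis appears among \eqref{one}--\eqref{seven}. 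Worse, condition \eqref{six} forces $w$ and $w'$ to be genuinely incomparable in the opposite direction: for $w=w'$ the double sum always diverges (pair $(j,k)$ with $(k,j)$ and use $r+r^{-1}\ge 2$), and in the explicit example closing the paper one has $w(k)\sim(1+k)^{-c}$, $w'(k)\sim(1+k)^{-a}$ with $a<c$, so $w'(k)/w(k)\to\infty$. There, with $\beta(k)=1+k$ one computes $d(U)=U^2$, and testing on the rank-one projections $f=f_0(\K)$ with $f_0(k)=\delta_{kk_0}$ (elements of $\mathcal A$) gives $\|[D,\pi(U)]f\|_{w'}^2/\|f\|_w^2=w'(k_0)/w(k_0)\to\infty$. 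So "coincides with a bounded operator" does not follow, and an additional comparison hypothesis between $w$ and $w'$ (or a decay hypothesis on $d(a)$) would be needed and reconciled with \eqref{six}.

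To be fair, the paper's own proof is just as terse at exactly this point ("the commutator is bounded since $D$ is an implementation of a derivation"), so you have reproduced its reasoning rather than introduced a new error; but a blind write-up should not present this step as settled, and your stated justification ("$d(a)\in A$ is a bounded operator") is a non sequitur in the two-Hilbert-space setting. By contrast, the domain bookkeeping you single out as "the only real obstacle" is comparatively benign: $\mathcal A$ is a core for $D$ (by the $Q_n$ argument of the Remark), it is preserved by $\pi(a)$, and once cross-norm boundedness were in hand the commutator identity would extend from the core by closedness in the standard way.
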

\begin{proof}By Theorem \ref{compact_para}, we have that $D$ is has a compact parametrix and so $\dcal$ has compact parametrices by the results in the appendix of~\cite{KMRSW}. Clearly~$\pi(a)$ preserves the domain of~$D$ and, since~$D$ is an implementation of a derivation $d\colon \mathcal{A} \to A$, the commutator $[\dcal,\pi(a)]$ is bounded for all $a\in\mathcal{A}$. This completes the proof.
\end{proof}

We conclude this paper by giving explicit examples of parameters $w(k)$, $w'(k)$, $\beta(k)$, and~$\mu(k)$ that satisfy the conditions of the above theorems.

Assume $3<a<2b-1<c$, nonnegative $N\geq\frac{c-2b-1}{2}$, $\beta(k) = 1+k$, and consider the following sequences:
\begin{gather*}
w'(k) = \frac{w'(0)}{(1+k)^a} , \qquad \mu(k) = \frac{1}{(1+k)^b} ,\qquad w(k) = \frac{w(0)}{(1+k)^c},
\end{gather*}
where $w'(0)$ and $w(0)$ are such that $\sum\limits_{k=0}^\infty w'(k) = 1 = \sum\limits_{k=0}^\infty w(k).$ Then a straightforward calculation shows that they
satisfy the necessary conditions for $(\mathcal{A},H,\dcal)$ to be an even spectral triple. Moreover, there is a choice of parameters $a$, $b$, $c$ such that $N$ can be equal to zero.

\subsection*{Acknowledgments} We would like to thank an anonymous referee for pointing out a critical issue with the initial version of this paper.

\pdfbookmark[1]{References}{ref}
\LastPageEnding


\begin{thebibliography}{99}
\footnotesize\itemsep=0pt

\bibitem{CKW}
Carey A.L., Klimek S., Wojciechowski K.P., A {D}irac type operator on the
 non-commutative disk, \href{https://doi.org/10.1007/s11005-010-0391-7}{\textit{Lett. Math. Phys.}} \textbf{93} (2010),
 107--125.

\bibitem{Connes}
Connes A., Noncommutative geometry, Academic Press, Inc., San Diego, CA, 1994.

\bibitem{C}
Connes A., Cyclic cohomology, quantum group symmetries and the local index
 formula for {${\rm SU}_q(2)$}, \href{https://doi.org/10.1017/S1474748004000027}{\textit{J.~Inst. Math. Jussieu}} \textbf{3}
 (2004), 17--68, \href{https://arxiv.org/abs/math.QA/0209142}{arXiv:math.QA/0209142}.

\bibitem{CM}
Connes A., Moscovici H., Transgression and the {C}hern character of
 finite-dimensional {$K$}-cycles, \href{https://doi.org/10.1007/bf02100052}{\textit{Comm. Math. Phys.}} \textbf{155}
 (1993), 103--122.

\bibitem{DD}
D'Andrea F., D\c{a}browski L., Local index formula on the equatorial
 {P}odle\'{s} sphere, \href{https://doi.org/10.1007/s11005-005-0047-1}{\textit{Lett. Math. Phys.}} \textbf{75} (2006), 235--254,
 \href{https://arxiv.org/abs/math.QA/0507337}{arXiv:math.QA/0507337}.

\bibitem{EFI}
Engli\v{s} M., Falk K., Iochum B., Spectral triples and {T}oeplitz operators,
 \href{https://doi.org/10.4171/JNCG/215}{\textit{J.~Noncommut. Geom.}} \textbf{9} (2015), 1041--1076,
 \href{https://arxiv.org/abs/1402.3061}{arXiv:1402.3061}.

\bibitem{FMR}
Forsyth I., Mesland B., Rennie A., Dense domains, symmetric operators and
 spectral triples, \textit{New York~J. Math.} \textbf{20} (2014), 1001--1020,
 \href{https://arxiv.org/abs/1306.1580}{arXiv:1306.1580}.

\bibitem{KM}
Klimek S., McBride M., d-bar operators on quantum domains, \href{https://doi.org/10.1007/s11040-010-9084-9}{\textit{Math. Phys.
 Anal. Geom.}} \textbf{13} (2010), 357--390, \href{https://arxiv.org/abs/1001.2216}{arXiv:1001.2216}.

\bibitem{KMRSW}
Klimek S., McBride M., Rathnayake S., Sakai K., Wang H., Derivations and
 spectral triples on quantum domains~{I}: {Q}uantum disk, \href{https://doi.org/10.3842/SIGMA.2017.075}{\textit{SIGMA}}
 \textbf{13} (2017), 075, 26~pages, \href{https://arxiv.org/abs/1705.04005}{arXiv:1705.04005}.

\bibitem{KMR}
Klimek S., McBride M., Rathnayake S., Derivations and spectral triples on
 quantum domains~{II}: {Q}uantum annulus, \textit{Sci. China Math.}, {t}o
 appear, \href{https://arxiv.org/abs/1710.06257}{arXiv:1710.06257}.

\end{thebibliography}
\end{document}